\newtheorem{theorem}{Theorem}[section]
\newtheorem{lemma}[theorem]{Lemma}
\newtheorem{conjecture}[theorem]{Conjecture}
\title{The Union-Closed Sets Conjecture for Small Families}
\author{Jens Ma{\ss}berg
\footnote{Institut f{\"u}r Optimierung und Operations 
Research, University of Ulm, jens.massberg@uni-ulm.de
}
}
\begin{document}
 \maketitle

 Keywords: {union-closed sets, Frankl's conjecture}

 \begin{abstract}
  We prove that the union-closed sets conjecture is true for separating 
  union-closed families $\mathcal{A}$ with
  $|\mathcal{A}| \leq 
  2\left(m+\frac{m}{\log_2(m)-\log_2\log_2(m)}\right)$ where $m$ denotes 
  the number of elements in $\mathcal{A}$.
 \end{abstract}

 \section{Introduction}
 
 A family $\mathcal{A}$ of sets is said to be \emph{union-closed} if for any 
two member sets $A,B\in\mathcal{A}$ their union $A\cup B$ is also a member of  
$\mathcal{A}$. 

A well-known conjecture is the \emph{Union-Closed Sets 
Conjecture} which is also called \emph{Frankl's conjecture}:
 
 \begin{conjecture}
  Any finite non-empty union-closed family of sets has an element that 
is contained in at least half of its member sets.
 \end{conjecture}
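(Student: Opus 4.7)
The plan is to attack the full conjecture through a combination of structural reduction and an information-theoretic averaging argument. First I would normalize the problem. By standard moves I may assume that $\mathcal{A}$ is separating, that $\emptyset\in\mathcal{A}$, and that the ground set is $[m]$; write $n = |\mathcal{A}|$. Suppose for contradiction that every $i\in[m]$ lies in strictly fewer than $n/2$ sets of $\mathcal{A}$, so that in the uniform distribution on $\mathcal{A}$ the marginal $p_i := \Pr[i\in A]$ satisfies $p_i < 1/2$ for every coordinate. The goal is to derive a contradiction from this global marginal bound together with union-closure.

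Second, I would set up the Gilmer-style entropy inequality. Let $A,B$ be independent uniform members of $\mathcal{A}$. Since $\mathcal{A}$ is union-closed, $A\cup B\in\mathcal{A}$, hence $H(A\cup B)\le H(A) = \log_2 n$. On the other hand, subadditivity of Shannon entropy gives $H(A\cup B)\le \sum_{i=1}^m h\bigl(1-(1-p_i)^2\bigr)$, where $h$ is the binary entropy, while $H(A) = \sum_i H(A_i\mid A_{<i}) \ge \sum_i h(p_i) - O(\text{correlation correction})$. Combining these produces a single scalar inequality that must be satisfied by the marginal vector $(p_1,\ldots,p_m)$ under the hypothesis $p_i<1/2$ for all $i$; the analytic content of the proof reduces to showing this inequality has no feasible solution.

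Third, and this is where the real work sits, I would attempt to close the gap between the known barrier $(3-\sqrt{5})/2$ and the target $1/2$. My plan is to introduce an auxiliary probability measure on $\mathcal{A}$ that is not uniform but biased towards sets in the middle layers, where union-closure is most restrictive, and re-run the entropy computation with a conditional decomposition that respects a carefully chosen ordering of coordinates (for instance, in increasing order of $p_i$). The idea is to extract from union-closure a nontrivial lower bound on the mutual information $I((A\cup B)_i ; (A\cup B)_{<i})$, turning the naively tight subadditivity step into a strict inequality whenever all $p_i<1/2$. This is the quantity that previous refinements have only been able to bound by zero.

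The main obstacle, and I want to be candid, is exactly this last step. Every refinement of the entropy method to date stalls at the point where one can exhibit near-extremal families for which subadditivity is tight coordinate by coordinate, and no purely analytic tightening of Gilmer's inequality can cross $1/2$. Any genuine completion of the plan therefore requires a combinatorial input that rules out these extremal configurations directly, and I expect that this input will have to come from analyzing the structure of the minimal counterexample: in particular, showing that in a separating union-closed family with all marginals below $1/2$ there must exist many pairs $(A,B)$ with $A\cup B = A$ forcing a rigid lattice-theoretic structure incompatible with the marginal constraint. Delivering such a combinatorial lemma, and not the information-theoretic packaging, will be the heart of the argument.
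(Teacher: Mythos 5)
This statement is Frankl's conjecture itself, which the paper does not prove and does not claim to prove: it is stated as an open conjecture, and the paper's actual theorem establishes only the special case $|\mathcal{A}|\le 2\bigl(m+\tfrac{m}{\log_2(m)-\log_2\log_2(m)}\bigr)$ by an elementary counting argument (Falgas-Ravry's sets $X_i$ plus Knill's minimal transversal $\hat{U}$). Your proposal, by contrast, aims at the full conjecture via the entropy method, and it is not a proof: it is a research plan with an explicitly acknowledged hole at its centre.

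The gap is concrete and, as you candidly note yourself, it is the entire difficulty. The Gilmer-style computation you set up --- comparing $H(A\cup B)\le\log_2 n$ for independent uniform $A,B$ against a coordinate-wise entropy decomposition under the hypothesis $p_i<1/2$ --- is known to yield a contradiction only when all $p_i$ are below roughly $(3-\sqrt{5})/2\approx 0.381$, and there are explicit near-extremal distributions showing that no purely analytic sharpening of the subadditivity step can reach $1/2$. Your proposed remedy is a combinatorial lemma asserting that separating union-closed families with all marginals below $1/2$ force a rigid lattice structure contradicting the marginal bound; but this lemma is neither stated precisely nor proved, and it is exactly the missing ingredient that would resolve the conjecture. (There is also a technical slip in the setup: to contradict $H(A\cup B)\le\log_2 n$ you need a \emph{lower} bound on $H(A\cup B)$ in terms of the $h\bigl(1-(1-p_i)^2\bigr)$, obtained from the chain rule and the union structure, not the upper bound from subadditivity that you wrote; and the ``correlation correction'' in your expression for $H(A)$ is not controlled.) As it stands the proposal establishes nothing beyond what the entropy literature already gives, namely a constant-fraction bound strictly below $1/2$. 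If your goal is a result in the spirit of the paper, note that the paper obtains its partial result without any entropy machinery at all, purely from the separating structure and a minimal hitting set.
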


There are many papers considering this conjecture. 
So it is known to be true if $\mathcal{A}$ has at most 12 
elements \cite{ziv} or at most 50 member sets \cite{lofaro,robertssimpson} or 
if the number of member sets is large compared to the number $m$ of elements, 
that is  $|\mathcal{A}| \geq \frac{2}{3}2^m$ \cite{balla}. 
Nevertheless, the conjecture is still 
far from being proved or disproved.
 A good survey on the current state of this conjecture is given by Bruhn and 
Schaudt~\cite{BruhnSchaudt}.

In this paper we consider the case that the number of 
member-sets is small compared to the number of elements.
But first we recall some basic definitions and results. Let $\mathcal{A}$ be 
a union-closed set.
We call  $U(\mathcal{A})=\bigcup_{A\in \mathcal{A}} A$ the \emph{universe} 
of  $\mathcal{A}$.
For an element $x\in U(\mathcal{A})$ the cardinality of $|\{A\in\mathcal{A}:\, 
x\in A\}|$ is called the \emph{frequency} of $x$. Thus the union-closed sets 
conjecture states that there exists an element $x\in U(\mathcal{A})$ of 
frequency at least $\frac{1}{2}|\mathcal{A}|$.

A family $\mathcal{A}$ is called \emph{separating} if for any two distinct 
elements $x,y\in U(\mathcal{A})$ there exists a set $A\in\mathcal{A}$ that 
contains exactly one of the elements $x$ and $y$. 
We can restrict ourselves to separating union-closed families: If there exist 
elements $x$ and $y$ such that each member set $A\in\mathcal{A}$ that contains 
$x$ also contains $y$, then we can delete $x$ from each such set and obtain a 
new family of the same cardinality that is still union-closed.
Falgas-Ravry showed 
that there are some sets in  $\mathcal{A}$ satisfying certain conditions 
which help us to analyze small separating union-closed families:
 
 \begin{theorem}[Falgas-Ravry \cite{FalgasRavry}]\label{theorem:fr}
  Let $\mathcal{A}$ be a separating union-closed family and let $x_1,\ldots, 
  x_m$ be the elements of $U(\mathcal{A})$ labeled in order of increasing 
  frequency.
  Then there exist sets $X_0,\ldots, X_m \in\mathcal{A}$ such that
  \begin{equation}
    x_i \notin X_i \quad \forall i\in\{1,\ldots, m\}
  \end{equation}
  and
  \begin{equation}
   \{x_{i+1},\ldots, x_m\} \subset X_i \quad \forall i\in\{0,\ldots, m\}
  \end{equation}
 \end{theorem}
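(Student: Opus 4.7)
The plan is to pick each $X_i$ as the maximal member of a suitably restricted subfamily and exploit union-closure to guarantee it lies in $\mathcal{A}$. For $i \in \{1,\ldots,m\}$ set $\mathcal{A}^{(i)} := \{A \in \mathcal{A} : x_i \notin A\}$. Because the union of two sets avoiding $x_i$ still avoids $x_i$, the subfamily $\mathcal{A}^{(i)}$ is itself union-closed, so provided it is nonempty the set $X_i := \bigcup_{A \in \mathcal{A}^{(i)}} A$ is a member of $\mathcal{A}^{(i)} \subseteq \mathcal{A}$ and satisfies $x_i \notin X_i$ by construction. For $i = 0$ I would simply take $X_0 := U(\mathcal{A}) = \bigcup_{A \in \mathcal{A}} A$, which belongs to $\mathcal{A}$ by union-closure of the full family and trivially contains $\{x_1,\ldots,x_m\}$.

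The substance of the proof is the containment $\{x_{i+1},\ldots,x_m\} \subset X_i$ for $i \geq 1$. It suffices to exhibit, for every pair $j > i$, some member $A \in \mathcal{A}$ with $x_j \in A$ and $x_i \notin A$; then $A \in \mathcal{A}^{(i)}$ and $x_j$ enters the union defining $X_i$. I would argue by contradiction: suppose every $A \in \mathcal{A}$ containing $x_j$ also contains $x_i$. Then the frequency of $x_j$ is at most that of $x_i$. Combined with the ordering hypothesis $f(x_i) \leq f(x_j)$ coming from $i < j$, this forces equality $f(x_i) = f(x_j)$, and together with the already noted one-sided inclusion it forces the collection of members of $\mathcal{A}$ containing $x_i$ to coincide with the collection containing $x_j$. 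Consequently no set in $\mathcal{A}$ separates $x_i$ from $x_j$, contradicting the separating hypothesis.

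Specializing this argument to $j = m$ simultaneously shows $\mathcal{A}^{(i)} \neq \emptyset$ for every $i < m$, so the constructions of $X_0,X_1,\ldots,X_{m-1}$ are well defined. The main obstacle, and the only step requiring extra care, is the boundary case $i = m$: the construction fails if $x_m$ happens to lie in every member of $\mathcal{A}$, for then $\mathcal{A}^{(m)}$ is empty and no candidate $X_m \in \mathcal{A}$ exists. This case has to be peeled off at the outset, which is painless for the intended application because any element of full frequency settles Frankl's conjecture immediately; the theorem is therefore applied under the standing assumption that no element of $U(\mathcal{A})$ is contained in every member of $\mathcal{A}$.
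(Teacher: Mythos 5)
Your proof is correct and takes essentially the same route as the paper: the paper picks, for each pair $i<j$, a witness $X_{ij}\in\mathcal{A}$ with $x_j\in X_{ij}$ and $x_i\notin X_{ij}$ (whose existence rests on exactly the separation-plus-frequency-ordering argument you spell out) and sets $X_i=\bigcup_{j>i}X_{ij}$, while you union over \emph{all} members avoiding $x_i$ --- a cosmetic difference. You are in fact more careful than the paper itself, which leaves the key existence step implicit and never constructs $X_m$ (the boundary case you correctly flag); only $X_0,\ldots,X_{m-1}$ are ever used downstream.
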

 \begin{proof}
  As $\mathcal{A}$ is separating, for any $1\leq i<j\leq m$ there exists a set 
  $X_{ij}\in\mathcal{A}$ such that $x_i\notin X_{ij}$ and $x_j\in X_{ij}$. For 
  all $1\leq i \leq m-1$ let $X_i=\bigcup _{j=i+1}^m X_{ij}$ and set 
  $X_0=U(\mathcal{A})$.
 \end{proof}

The previous theorem directly implies that the conjecture is satisfied for 
small families:

\begin{lemma}
 Any separating family on $m$ elements with at most $2m$ 
 member sets satisfies the Union-Closed Sets Conjecture.
\end{lemma}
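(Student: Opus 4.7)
The plan is to apply Theorem~\ref{theorem:fr} directly and then count how many of the guaranteed sets contain the most frequent element $x_m$. Since the labeling is in order of increasing frequency, $x_m$ is (one of) the highest-frequency element(s), so it suffices to lower-bound its frequency by $|\mathcal{A}|/2$.

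First, I would invoke Falgas-Ravry to obtain sets $X_0, X_1, \ldots, X_m \in \mathcal{A}$ satisfying the two conditions of the theorem. Next, I would observe that each of the sets $X_0, X_1, \ldots, X_{m-1}$ contains $x_m$, because the inclusion $\{x_{i+1}, \ldots, x_m\} \subset X_i$ forces $x_m \in X_i$ whenever $i \leq m-1$. This already identifies $m$ sets in $\mathcal{A}$ containing $x_m$, provided these sets are pairwise distinct.

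The key step is verifying distinctness. For any $0 \leq i < j \leq m-1$, the set $X_i$ contains $x_j$ (since $j > i$), while $X_j$ does not contain $x_j$ by the first condition of the theorem. Hence $X_i \neq X_j$, and so the $m$ sets $X_0, X_1, \ldots, X_{m-1}$ are distinct members of $\mathcal{A}$ all containing $x_m$. Consequently the frequency of $x_m$ is at least $m$.

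Finally, combining this bound with the hypothesis $|\mathcal{A}| \leq 2m$ gives
\[
\mathrm{freq}(x_m) \;\geq\; m \;\geq\; \tfrac{1}{2}|\mathcal{A}|,
\]
so $x_m$ is an element of frequency at least $|\mathcal{A}|/2$, confirming the conjecture. There is no real obstacle here: the entire content lies in Theorem~\ref{theorem:fr}, which yields essentially half of $\mathcal{A}$ for free as a chain containing $x_m$; the only small check is that the $X_i$ with $i < m$ are mutually distinct, which is immediate from the $x_i \notin X_i$ condition.
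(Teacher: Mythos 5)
Your proof is correct and follows exactly the paper's argument: apply Theorem~\ref{theorem:fr}, note that $x_m$ lies in each of $X_0,\ldots,X_{m-1}$, and verify these $m$ sets are pairwise distinct (a detail the paper asserts without elaboration, which you justify correctly via $x_j\in X_i$ for $i<j$ but $x_j\notin X_j$). No differences worth noting.
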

\begin{proof}
 Consider the sets $X_0,\ldots, X_{m-1}$ constructed in Theorem
\ref{theorem:fr} and observe that the most frequent element $x_m$ is contained 
in all these sets. As these sets are pairwise different, $x_m$ is 
contained in at least $m$ of all member sets of $\mathcal{A}$.
\end{proof}

In this paper we show that the Union-Closed Sets Conjecture is also satisfied 
for families that contain (slightly) more then $2m$ member sets.
 Considering such families is motivated by a result of Hu (see also 
\cite{BruhnSchaudt}): 
\begin{theorem}[Hu \cite{Hu}]
   Suppose there is a $c>2$ so that any separating union-closed family 
  $\mathcal{A}'$ with $|\mathcal{A}'|\leq c|U(\mathcal{A}')|$ satisfies the 
  Union-Closed Sets Conjecture. Then, for every union-closed family 
  $\mathcal{A}$, there is an element $x\in U(\mathcal{A})$ of frequency
  \begin{equation}
   |\{A\in\mathcal{A}:\, x\in A\}|\geq \frac{c-2}{2(c-1)}|\mathcal{A}|.
  \end{equation}
 \end{theorem}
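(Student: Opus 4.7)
The plan is to reduce an arbitrary union-closed family to the density regime covered by the hypothesis, by padding with a short ``chain'' of auxiliary sets.

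Since the standard cleaning procedure recalled earlier preserves both $|\mathcal{A}|$ and the maximum frequency, I may assume $\mathcal{A}$ is separating. Set $m=|U(\mathcal{A})|$. If $|\mathcal{A}|\le cm$, the hypothesis applies directly and yields an element of frequency at least $|\mathcal{A}|/2\ge\frac{c-2}{2(c-1)}|\mathcal{A}|$, so assume $|\mathcal{A}|>cm$ and let $k=\lceil(|\mathcal{A}|-cm)/(c-1)\rceil$. I would introduce fresh elements $y_1,\dots,y_k$ and enlarge $\mathcal{A}$ to
\[
  \mathcal{A}'=\mathcal{A}\cup\bigl\{\,U(\mathcal{A})\cup\{y_1,\dots,y_i\}:1\le i\le k\,\bigr\}.
\]
A routine check shows $\mathcal{A}'$ is union-closed and still separating, with $|\mathcal{A}'|=|\mathcal{A}|+k$ and $|U(\mathcal{A}')|=m+k$, and the choice of $k$ is exactly what forces $|\mathcal{A}'|\le c|U(\mathcal{A}')|$.

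Next I would track frequencies: every original $x$ lies in all $k$ new sets, so its frequency in $\mathcal{A}'$ is $f_x+k$ where $f_x$ was its frequency in $\mathcal{A}$, while $y_i$ has frequency $k-i+1\le k$. Applying the hypothesis to $\mathcal{A}'$ produces an element of frequency at least $(|\mathcal{A}|+k)/2$. A short calculation using $c>2$ and $|\mathcal{A}|>cm$ gives $k<|\mathcal{A}|$, hence $k<(|\mathcal{A}|+k)/2$, so this witness cannot be a $y_i$ but must be an original $x$. Then $f_x+k\ge(|\mathcal{A}|+k)/2$, i.e., $f_x\ge(|\mathcal{A}|-k)/2$; substituting $k$ and absorbing the ceiling via the trivial inequality $cm\ge c-1$ yields $f_x\ge\frac{c-2}{2(c-1)}|\mathcal{A}|$.

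The main obstacle is designing the padding to balance two competing constraints: enough auxiliary elements must be added to bring $|\mathcal{A}'|/|U(\mathcal{A}')|$ below $c$, while keeping each new element's frequency small enough that it cannot ``steal'' the high-frequency conclusion from the old elements. The nested chain construction realizes this trade-off precisely, since each new set simultaneously adds one new universe element and boosts every old frequency by exactly one.
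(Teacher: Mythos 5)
The paper does not actually prove this theorem---it is quoted from Hu (via the Bruhn--Schaudt survey)---so there is no in-paper proof to compare against. Your padding argument is correct and complete, and it is essentially the standard dilution proof of this reduction. The key checks all go through: the chain $U(\mathcal{A})\cup\{y_1,\dots,y_i\}$ keeps $\mathcal{A}'$ union-closed and separating and adds exactly $k$ new member sets and $k$ new universe elements; the choice $k=\lceil (|\mathcal{A}|-cm)/(c-1)\rceil$ gives $|\mathcal{A}'|\le c|U(\mathcal{A}')|$; each $y_i$ has frequency at most $k<(|\mathcal{A}|+k)/2$ (using $c>2$ and $m\ge 1$), so the high-frequency witness is an original element; and $f_x\ge(|\mathcal{A}|-k)/2\ge\frac{(c-2)|\mathcal{A}|+cm-(c-1)}{2(c-1)}\ge\frac{c-2}{2(c-1)}|\mathcal{A}|$ since $cm\ge c-1$. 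The only cosmetic points worth tightening are the degenerate case $U(\mathcal{A})=\emptyset$ (where the conclusion is vacuously excluded) and a one-line justification that the separating reduction preserves the maximum frequency (the two merged elements always co-occur, hence have equal frequency, so the surviving one retains the maximum).
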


Therefore, if the Union-Closed Sets Conjecture is satisfied for 'small' 
families, then for any union-closed family there exists an element that 
appears with a frequency at least a constant fraction of the number of 
member sets.
In this paper we push the bound over $2m$, but for increasing $m$ it still 
converges slowly towards $2m$.
 

 
 \section{Frankl's Conjecture for Small Families}
 
 Combining and extending the idea of the proof of Theorem \ref{theorem:fr} and 
an argument of Knill \cite{knill}  we get the main result of this paper.
 
\begin{theorem}
  The Union-Closed Sets Conjecture is true for separating union-closed families 
 $\mathcal{A}$ with a universe containing $m$ elements 
 satisfying $$|\mathcal{A}| \leq 2\left(m + 
 \frac{m}{\log_2(m)-\log_2\log_2(m)}\right). $$
\end{theorem}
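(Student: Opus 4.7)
I would first apply Theorem~\ref{theorem:fr} to obtain sets $X_0, X_1, \ldots, X_{m-1}\in\mathcal{A}$, all containing $x_m$; this already yields $f(x_m) \geq m$ and settles the case $|\mathcal{A}| \leq 2m$. The key sharper observation from the same construction is that, for every $j < m$, the sets $X_0, \ldots, X_{j-1}$ all contain $x_j$ (since $X_i \supseteq \{x_{i+1}, \ldots, x_m\} \ni x_j$ whenever $i<j$), so within the sub-family $\mathcal{A}^+ := \{A \in \mathcal{A} : x_m \in A\}$ the element $x_j$ has frequency at least $j$.

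\textbf{Reduction and Knill input.} Argue by contradiction and assume $f(x_m) < |\mathcal{A}|/2$. Then $\mathcal{A}' := \mathcal{A} \setminus \mathcal{A}^+$ is a union-closed family of size $|\mathcal{A}'| > |\mathcal{A}|/2$, and the maximality of $f(x_m)$ combined with $f^+(x_j) \geq j$ forces the pointwise bound $f'(x_j) \leq f(x_m) - j$ (with $f'$ the frequency in $\mathcal{A}'$). I would feed $\mathcal{A}'$ into Knill's argument: Knill's result produces an element $y = x_j$ with $f'(y) \geq (|\mathcal{A}'|-1)/\log_2 |\mathcal{A}'|$, and combining with $f^+(x_j) \geq j$ yields $f(x_m) \geq j + (|\mathcal{A}'|-1)/\log_2|\mathcal{A}'|$. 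An iterative version---at each step pick the most frequent element $y_k = x_{j_k}$ of the remaining union-closed sub-family $\mathcal{A}_{k-1}$ and pass to $\mathcal{A}_k := \{A \in \mathcal{A}_{k-1} : y_k \notin A\}$---produces distinct indices $j_1, j_2, \ldots$ in $\{1, \ldots, m-1\}$ (because $y_k \notin U(\mathcal{A}_k)$), each yielding
\[
 f(x_m) \;\geq\; j_k + \frac{|\mathcal{A}_{k-1}|-1}{\log_2|\mathcal{A}_{k-1}|}.
\]

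\textbf{Main obstacle.} The technical heart is to choose the number of iterations (or, equivalently, the sub-family to which Knill is applied) so that these inequalities jointly force $f(x_m) \geq |\mathcal{A}|/2$ at precisely the threshold $|\mathcal{A}| = 2(m + m/(\log_2 m - \log_2 \log_2 m))$. The nested logarithm $\log_2 m - \log_2 \log_2 m = \log_2(m/\log_2 m)$ strongly suggests a Lambert-$W$-type inversion, with the optimum balancing the linear ``index gain'' from distinct $j_k$'s against the Knill gain $|\mathcal{A}_{k-1}|/\log_2 |\mathcal{A}_{k-1}|$. Getting this calibration exactly right, via careful bookkeeping on the Knill decay $|\mathcal{A}_k| \leq |\mathcal{A}_{k-1}|(1 - 1/\log_2|\mathcal{A}_{k-1}|)$, is what I expect to be the main technical difficulty---and it is plausible that the clean form of the bound requires that Knill's argument be localized to a sub-family of $\mathcal{A}'$ supported on high-index elements, rather than applied naively.
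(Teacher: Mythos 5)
Your setup is sound as far as it goes: the sets $X_0,\ldots,X_{j-1}$ from Theorem~\ref{theorem:fr} do give $f^+(x_j)\geq j$ inside $\mathcal{A}^+=\{A\in\mathcal{A}:x_m\in A\}$, the complement $\mathcal{A}'$ is union-closed, and the pointwise bound $f'(x_j)\leq f(x_m)-j$ follows. But the ``calibration'' you defer is not a technicality that can be filled in later: the scheme as proposed cannot reach the stated bound. Knill's frequency theorem hands you an element of $\mathcal{A}_{k-1}$ of frequency at least $(|\mathcal{A}_{k-1}|-1)/\log_2|\mathcal{A}_{k-1}|$, but it gives no control whatsoever over the index $j_k$ of that element in the frequency order of $\mathcal{A}$. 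In the worst case consistent with your inequalities the indices come out as $j_k=k$ and each Knill frequency sits at its lower bound, so $|\mathcal{A}_k|$ shrinks only by a factor $(1-1/\log_2|\mathcal{A}_{k-1}|)$ per step and the process runs for $\Theta(\log^2 n)$ steps; then every individual bound $j_k+f_{\mathcal{A}_{k-1}}(y_k)$ is $O(n/\log n)\ll m$, and the aggregated (summed) version yields only $f(x_m)\gtrsim n/\log^2 n$. Neither improves on the trivial $f(x_m)\geq m$ you already have, let alone reaches $m+m/(\log_2 m-\log_2\log_2 m)$. Your closing remark that Knill would need to be ``localized to a sub-family supported on high-index elements'' is precisely the missing idea, but restricting the universe does not preserve the number of member sets, so this is not a routine fix.

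The paper takes a genuinely different route: it never invokes Knill's frequency theorem, only the combinatorial core of his proof. Let $\tilde U$ be the set of elements that occur as the highest-index element of some member set, and let $\hat U\subseteq\tilde U$ be minimal such that every non-empty member set meets $\hat U$. Minimality together with union-closedness forces, for every $B\subseteq\hat U$, a member set $P_B$ with $P_B\cap\hat U=B$: these are $2^k$ distinct member sets (where $k=|\hat U|$) carrying in total only $k2^{k-1}$ incidences with $\hat U$. On the other hand, the $m$ sets $M_i=\bigcup_{A\in\mathcal{A}:\,x_i\notin A}A$ each contain all of $\tilde U\supseteq\hat U$ except possibly $x_i$ itself, so the roughly $m-k$ of them lying outside $\mathcal{P}$ each carry $k$ incidences, while every other non-empty member set carries at least one. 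Double-counting these incidences against the assumed frequency cap $m+c$ gives $n\leq k(m+c)+(2^k-k2^{k-1})+(m-k)(1-k)$, and optimizing over $k$ (taking $k\approx\log_2 m-\log_2\log_2 m+2$ so as to balance $2^{k-1}$ against $m/(k-2)$) produces exactly the denominator $\log_2 m-\log_2\log_2 m$. If you want to salvage your plan, this is the ingredient to import: a structural reason why many member sets meet a small, explicitly controlled set of elements, rather than a black-box frequency bound whose witness you cannot locate.
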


 \begin{proof}
%
 Let $\mathcal{A}$ be a separating union-closed family,
 let the elements $x_1,\ldots, x_m$  of $U(\mathcal{A})$ be labeled in order of
increasing frequency and set $n=|\mathcal{A}|$.
Assume that each element appears in at most $m+c$ member sets. We compute an 
upper bound on the size of $n$.

For $i\in \{1,\ldots, m\}$ we set
\begin{equation}\label{mi}
 M_i = \bigcup_{A\in \mathcal{A}: x_i\notin A} A
\end{equation}
to be the union of all sets containing $x_i$
and we set $M_0=U$.
If the sets $X_i$, $i\in\{0,\ldots, m\}$, are chosen as in Theorem 
\ref{theorem:fr}, then we have $X_i\subset M_i$ for all $i\in 
\{0,\ldots, m-1\}$ and thus 
\begin{equation}\label{eq1}
 \{x_{i+1},\ldots, x_m\} \subseteq M_i.
\end{equation}

Let $\tilde{U}=\{x_i:\, \exists A\in\mathcal{A}\text{ with } \max_{x_j\in A} 
j\}$ be the set of all $x_i$ which are the elements with the highest index in 
some set $A$.

For $x_i\in \tilde{U}$ we set
\begin{equation}
 A_i = \bigcup_{A\in \mathcal{A}:\, i= \max \{j:\, x_j\in A\} } A.
\end{equation}
By definition $x_i\in A_i$. Now consider $j> i$. As $x_j\notin A_i$ we have 
$A_i\subset M_j$. Together with (\ref{eq1}) we have
\begin{equation}
 x_i \in M_j \quad \forall x_i\in \tilde{U}, j\in \{0,\ldots, m-1\}, i\neq j.
\end{equation}

Observe that every non-empty member set of $\mathcal{A}$ touches $\tilde{U}$.
Following an argument of Knill \cite{knill} let $\hat{U}\subseteq\tilde{U}$ be 
minimal such that every non-empty set of 
$\mathcal{A}$ touches $\hat{U}$. Then for all $x_i\in\hat{U}$ there exists a 
set $A\in\mathcal{A}$ with $\hat{U}\cap A = \{x_i\}$; if not, 
$\hat{U}\setminus\{x_i\}$ still touches every member set of $\mathcal{A}$ 
contradicting the minimality of $\hat{U}$.
Therefore as $\mathcal{A}$ is union-closed,
for each $B\subseteq \hat{U}$ there exists a set $P_B\in\mathcal{A}$ with 
$P_B\cap \hat{U}=B$. Let $\mathcal{P}=\{P_B:\, B\subseteq \hat{U}\}$.
The sets in $\mathcal{P}$ are pairwise disjoint and each element 
$x_i\in\hat{U}$ is contained in exactly half of the sets. Setting $k=|\hat{U}|$,
we conclude that there are $2^k$ sets in $\mathcal{P}$ containing in total 
$k2^{k-1}$ elements from $\hat{U}$.

Note, that $\mathcal{P}$ might contain the sets $M_i$ for $x_i\in\hat{U}$
and one additional set $M_j$ with $\hat{U}\subset M_j$.
But then $\{M_0,\ldots, M_{m-1}\}$ contains $m-k$ sets that are not in 
$\mathcal{P}$ and each of these sets contains all elements of $\hat{U}$.

Before we compute an upper bound for the number of elements in $\mathcal{A}$
we summarize the previous observations:

\begin{itemize}
 \item Each of the $k$ elements in $\hat{U}$ appears in at most $m+c$ 
member sets,
 \item the $2^k$ sets in $\mathcal{P}$ contain in total $k2^{k-1}$ copies of 
elements of $\hat{U}$,
 \item there are $m-k$ additional member sets, each containing all elements 
of $\hat{U}$ and
 \item all remaining member sets contain at least one element of $\hat{U}$.
\end{itemize}

We conclude:

\begin{eqnarray}
 n &\leq& k(m+c) + (2^k -k2^{k-1}) + (m-k)(1-k)\\
 &=& m +kc +(2-k)2^{k-1} + k^2 -k. 
\end{eqnarray}

Suppose the Union-Closed Sets Conjecture is wrong, that is, $n>2(m+c)$ or 
$\frac{n}{2}-m>c$.
Then
\begin{eqnarray}
 n & \leq&  m +k(\frac{n}{2}-m) + (2-k)2^{k-1} + k^2-k
\end{eqnarray}
or
\begin{eqnarray}
 n & \geq & 2\frac{(k-1)m+(k-2)2^{k-1}
 +k-k^2
 }{k-2}\\
 &\geq& 2\left(m+2^{k-1} +\frac{m}{k-2} 
 -k-3
 \right).
\end{eqnarray}

We conclude that the conjecture is true for all $n$ satisfying
\begin{equation}
 n  \leq   2\left(m +  \min_{k\in\mathbb{N}} \left( 2^{k-1} 
+\frac{m}{k-2} -k-3\right) 
\right). \label{ieq1}
\end{equation}

The function $f_m(k):=  2^{k-1} 
+\frac{m}{k-2} -k-3$ is convex. \v{Z}ivkovi\'{c} et al.~\cite{ziv} showed
that the Union-Closed Sets Conjecture is satisfied for $m\leq 12$ so we 
can assume that $m\geq 13$. In this case the minimum of $f_m(k)$ is obtained in 
the interval $[5,\log_2(m)]$ and we get

\begin{eqnarray}
 f_m(k) &=& \max \left\{2^{k-1}, \frac{m}{k-2}\right\} + \left( 
\min\left\{2^{k-1}, \frac{m}{k-2}\right\} -3-k \right) \\
 &\geq& \max \left\{2^{k-1}, \frac{m}{k-2}\right\} \\
 &\geq& \min_{k'} \left( \max \left\{2^{k'-1}, \frac{m}{k'-2} \right\} \right) 
\\
 &\geq& \max_{k'} \left( \min \left\{2^{k'-1}, \frac{m}{k'-2} \right\} 
\right).\label{ieq2}
\end{eqnarray}
The last inequality is due to the fact that $2^{k-1}$ is increasing in $k$ 
while $\frac{m}{k-2}$ is decreasing in $k$.

Setting $k'=\log_2(m) -\log_2\log_2(m) +2$
we get
\begin{eqnarray*}
 \log_2\left( \frac{m}{k'-2}\right) &=& \log_2(m) - \log_2
 \left(  \log_2(m)-\log_2\log_2(m)  \right) \\
&=& \log_2(m) -\log_2\log_2(m) - \log_2\left( 
1-\frac{\log_2\log_2(m)}{\log_2(m)}\right) \\
&\leq& \log_2(m) -\log_2\log_2(m) +1  \\
&=&\log_2(2^{k'}).
\end{eqnarray*}

Inserting this result in (\ref{ieq2}) and (\ref{ieq1}) we finally obtain
that the Union-Closed Sets Conjecture is true for all $n$ satisfying
\begin{equation}
 n \leq 2\left(m+\frac{m}{\log_2(m)-\log_2\log_2(m)}\right).
\end{equation}
\end{proof}

\section{Acknowledgement}
 The author thanks Henning Bruhn-Fujimoto for pointing him to the union-closed 
sets conjecture. 

%
%
%
%

\nocite{*}

\bibliography{ucs}{}
\bibliographystyle{plain} 

\end{document}